\newtheorem{theorem}{Theorem}[section]
\newtheorem{lemma}[theorem]{Lemma}
\newtheorem{corollary}[theorem]{Corollary}
\theoremstyle{definition}
\newtheorem{example}[theorem]{Example}
\newtheorem{obs}[theorem]{Observation}
\newtheorem{prop}[theorem]{Proposition}
\numberwithin{equation}{section}
\begin{document}
\title{The Inverse of the Incidence Matrix of a Unicyclic Graph}
\author{Ryan Hessert} 
\author{Sudipta Mallik\thanks{Corresponding author}} 
\affil{\small Department of Mathematics and Statistics, Northern Arizona University, 801 S. Osborne Dr.\\ PO Box: 5717, Flagstaff, AZ 86011, USA 

rph53@nau.edu, sudipta.mallik@nau.edu}

\maketitle
\begin{abstract}
The vertex-edge incidence matrix of a (connected) unicyclic graph $G$ is a square matrix which is invertible if and only if the cycle of $G$ is an odd cycle.  A combinatorial formula of the inverse of the incidence matrix of an odd unicyclic graph was known. A combinatorial formula of the Moore-Penrose inverse of the incidence matrix of an even unicyclic graph is presented solving an open problem.
\end{abstract}

\section{Introduction}
The {\it Moore–Penrose inverse} of an $m\times n$ real matrix $A$, denoted by $A^+$, is the $n\times m$ real matrix that satisfies the following equations \cite{BG}:
$$AA^+A=A, A^+AA^+=A^+, (AA^+)^T=AA^+, (A^+A)^T=A^+A.$$
When $A$ is invertible, $A^+=A^{-1}$.

Let $G$ be a simple graph on $n$ vertices $1,2,\ldots,n$ with $m$ edges $e_1,e_2,\ldots,e_m$. The vertex-edge {\it incidence matrix} of $G$, denoted by $M$, is the $n\times m$ matrix whose $(i,j)$-entry is $1$ if vertex $i$ is incident with edge $e_j$ and $0$ otherwise. When $G$ is connected, the distance between its vertices $i$ and $j$, denoted by $d(i,j)$,  is the minimum number of edges in a path between $i$ and $j$. 
\begin{obs}\label{Obs on M}
Let $G$ be a connected graph on $n$ vertices $1,2,\ldots,n$ with $m$ edges and the incidence matrix $M$. If $G$ has no odd cycles (i.e., $G$ is bipartite), then $$[(-1)^{d(i,j)}] M=O_{n,m}.$$
\end{obs}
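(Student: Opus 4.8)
The plan is to compute the entries of the product directly. Write $D = [(-1)^{d(i,j)}]$ for the $n\times n$ matrix in question, so that the $(i,k)$-entry of $DM$ is $\sum_{j=1}^{n} (-1)^{d(i,j)} M_{jk}$. Since the $k$-th column of $M$ is the indicator vector of the two endpoints of the edge $e_k$, say $e_k = \{a,b\}$, this sum collapses to exactly two terms, namely $(-1)^{d(i,a)} + (-1)^{d(i,b)}$. The claim $DM = O_{n,m}$ is therefore equivalent to the assertion that for every vertex $i$ and every edge $\{a,b\}$ of $G$, the distances $d(i,a)$ and $d(i,b)$ have opposite parity.

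To establish that parity statement I would invoke bipartiteness in the standard way. Fix a bipartition $V(G) = X \sqcup Y$. For any vertex $i$, a shortest path from $i$ to a vertex $v$ alternates between $X$ and $Y$, so the parity of $d(i,v)$ is determined by whether $v$ lies on the same side as $i$ or the opposite side. Since $a$ and $b$ are adjacent they lie on opposite sides of the bipartition, hence exactly one of them is on $i$'s side and the other is not, giving $d(i,a) \not\equiv d(i,b) \pmod 2$. (Alternatively, if $d(i,a) \equiv d(i,b) \pmod 2$ one could concatenate a shortest $i$--$a$ walk, the edge $ab$, and a shortest $b$--$i$ walk to obtain a closed walk of odd length, contradicting that $G$, being bipartite, has no odd closed walks.)

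Combining the two steps: each entry $(-1)^{d(i,a)} + (-1)^{d(i,b)}$ is a sum of $+1$ and $-1$, hence $0$, and therefore every entry of $DM$ vanishes. I do not anticipate a genuine obstacle here; the only point requiring a word of justification is the parity fact, and connectedness is used just to guarantee that $d(i,v)$ is finite for all $i,v$ so that the matrix $D$ is well defined.
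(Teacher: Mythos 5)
Your proposal is correct and follows essentially the same route as the paper: both reduce the $(i,k)$-entry of the product to $(-1)^{d(i,a)}+(-1)^{d(i,b)}$ for the endpoints $a,b$ of edge $e_k$ and then observe that bipartiteness forces these distances to have opposite parity. The paper states the parity fact tersely as $d(i,r)=d(i,s)\pm 1$, whereas you justify it via the bipartition; this is only a difference in the level of detail, not in the argument.
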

\begin{proof}
The $(i,j)$-entry of $[(-1)^{d(i,j)}]M$ is
$(-1)^{d(i,r)}+(-1)^{d(i,s)}$
where edge $e_j=\{r,s\}$. If $G$ has no odd cycles, then $d(i,r)=d(i,s)\pm 1$ which implies $(-1)^{d(i,r)}+(-1)^{d(i,s)}=0$.
\end{proof}

In 1965,  Ijira first studied the Moore-Penrose inverse of the oriented incidence matrix of a graph in \cite{I}. Bapat did the same for the Laplacian and the edge-Laplacian of trees \cite{B}. Further research studied the same topic for different graphs such as distance regular graphs \cite{AB,ABE}. Meanwhile the signless Laplacian of graphs started being an active area of research \cite{CRC, HM}.  Hessert and Mallik studied the Moore-Penrose inverses of the incidence matrix and  the signless Laplacian of a tree and an odd unicyclic graph in \cite{Hessert1}. In particular, they provided the following theorem about the Moore-Penrose inverse of the incidence matrix of a connected graph:

\begin{theorem}\cite[Theorem 2.15]{Hessert1}\label{M+}
Let $G$ be a connected graph on $n$ vertices $1,2,\ldots,n$ with the incidence matrix $M$. 
\begin{enumerate}
    \item[(a)] If $G$ has an odd cycle, then $MM^+=I_n$. 
    
    \item[(b)] If $G$ has no odd cycles (i.e., $G$ is bipartite), then $$MM^+=I_n-\frac{1}{n}[(-1)^{d(i,j)}].$$
\end{enumerate}
\end{theorem}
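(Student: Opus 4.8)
\medskip
\noindent\textbf{Proof proposal.}
The plan is to recognize $MM^+$ as an orthogonal projection and then identify the relevant subspace of $\R^n$. For any real matrix $M$, the matrix $P:=MM^+$ is symmetric by the third Moore--Penrose equation, is idempotent since $P^2=(MM^+M)M^+=MM^+=P$ by the first equation, and satisfies $\operatorname{col}(P)=\operatorname{col}(M)$ because $M=MM^+M=PM$. A symmetric idempotent is the orthogonal projector onto its column space, so $MM^+$ is the orthogonal projection of $\R^n$ onto $\operatorname{col}(M)$. Hence everything reduces to describing $\operatorname{col}(M)$, equivalently its orthogonal complement $\operatorname{col}(M)^\perp=\ker(M^T)$, the left null space of $M$.

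Next I would analyze $\ker(M^T)$ directly. A vector $w\in\R^n$ satisfies $w^TM=0$ if and only if $w_r+w_s=0$ for every edge $\{r,s\}$ of $G$; tracing a walk $1=u_0\sim u_1\sim\cdots\sim u_\ell=i$ then forces $w_i=(-1)^{\ell}w_1$. If $G$ has an odd cycle, then $G$ contains a closed walk of odd length based at some vertex $a$, so $w_a=-w_a$, hence $w_a=0$ and, by connectivity, $w=0$; thus $\rank M=n$, $\operatorname{col}(M)=\R^n$, and $MM^+=I_n$, which proves (a). If $G$ is bipartite, every walk joining two fixed vertices has the same parity, namely that of their distance, so the relation reads $w_i=(-1)^{d(1,i)}w_1$ for all $i$; that is, $w=w_1 v$ with $v_i:=(-1)^{d(1,i)}$. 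Conversely $v\in\ker(M^T)$ — this is exactly row $1$ of the identity $[(-1)^{d(i,j)}]M=O_{n,m}$ of Observation~\ref{Obs on M} — and $v\neq 0$, so $\ker(M^T)=\operatorname{span}(v)$ and $\rank M=n-1$.

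Finally, for the bipartite case I would assemble the projector. Using the additivity of distance parity in a connected bipartite graph, $d(i,j)\equiv d(i,1)+d(1,j)\pmod 2$, one gets $(-1)^{d(i,j)}=v_iv_j$, i.e. $[(-1)^{d(i,j)}]=vv^T$, while $v^Tv=n$ since $v$ has entries $\pm1$. The orthogonal projector onto $\operatorname{span}(v)$ is therefore $\dfrac{vv^T}{v^Tv}=\dfrac{1}{n}[(-1)^{d(i,j)}]$, so the orthogonal projector onto $\operatorname{col}(M)=\ker(M^T)^\perp=\operatorname{span}(v)^\perp$ is the complementary one, $I_n-\dfrac{1}{n}[(-1)^{d(i,j)}]$. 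By the first paragraph this equals $MM^+$, which proves (b).

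The genuinely substantive ingredient is the structure of $\ker(M^T)$: that it is trivial when $G$ is non-bipartite and is spanned by the alternating $\pm1$ vector $v$ when $G$ is bipartite, together with the verification (supplied by Observation~\ref{Obs on M}) that $v$ actually lies in this kernel. I expect that step — and making sure the walk-parity bookkeeping is airtight in both cases — to be the main obstacle; once $\ker(M^T)$ is pinned down, the remainder is a formal manipulation of the Moore--Penrose equations and the elementary fact that a symmetric idempotent is an orthogonal projector.
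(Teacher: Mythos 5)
Your argument is correct and complete. Note, however, that this paper does not prove the statement at all: Theorem~\ref{M+} is imported verbatim from \cite[Theorem 2.15]{Hessert1}, so there is no in-paper proof to compare against. Taken on its own merits, your route --- identify $MM^+$ as the orthogonal projector onto $\operatorname{col}(M)$ via the Moore--Penrose axioms, then compute $\ker(M^T)$ by propagating the edge relations $w_r+w_s=0$ along walks --- is sound. The two substantive facts you need both check out: in the non-bipartite case an odd closed walk based at any vertex forces $w=0$, giving $\rank M=n$ and part (a); in the bipartite case the kernel is exactly $\operatorname{span}(v)$ with $v_i=(-1)^{d(1,i)}$ (the containment $v\in\ker(M^T)$ is precisely one row of Observation~\ref{Obs on M}), and the identity $(-1)^{d(i,j)}=v_iv_j$ together with $v^Tv=n$ turns $\frac{1}{n}[(-1)^{d(i,j)}]$ into the rank-one projector onto $\operatorname{span}(v)$, yielding part (b). The only steps worth writing out in full if this were to be included are the parity bookkeeping (every walk between two fixed vertices of a bipartite graph has the parity of their distance, and $d(i,j)\equiv d(i,1)+d(1,j)\pmod 2$) and the existence of an odd closed walk through an arbitrary vertex when an odd cycle is present; both are standard and you have indicated them correctly.
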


A {\it unicyclic graph} on $n$ vertices is a simple connected graph that has a unique cycle as a subgraph. A unicyclic graph on $n$ vertices has $n$ edges. From the preceding theorem, we have the following observation.

\begin{obs}
Let $G$ be a unicyclic graph with the incidence matrix $M$. Then $M$ is invertible if and only if $G$ is an odd unicyclic graph.
\end{obs}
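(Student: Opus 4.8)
The plan is to combine the fact that $M$ is square with Theorem~\ref{M+}. Since a unicyclic graph on $n$ vertices has exactly $n$ edges, $M$ is an $n\times n$ matrix, so $M$ is invertible if and only if it has full rank $n$. Moreover, the unique cycle of $G$ is the only cycle of $G$, so $G$ is bipartite precisely when that cycle is even; equivalently, $G$ has an odd cycle if and only if $G$ is an odd unicyclic graph. Thus it suffices to show that $M$ is invertible if and only if $G$ has an odd cycle.

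First I would handle the ``if'' direction. Suppose $G$ has an odd cycle. By Theorem~\ref{M+}(a), $MM^+=I_n$. Since $M$ and $M^+$ are both $n\times n$, the matrix $M^+$ is a right inverse of the square matrix $M$, and a square matrix with a right inverse is invertible; hence $M$ is invertible (with $M^{-1}=M^+$).

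For the converse I would argue the contrapositive. Suppose $G$ has no odd cycle, i.e., $G$ is bipartite. By Theorem~\ref{M+}(b), $MM^+=I_n-\tfrac{1}{n}[(-1)^{d(i,j)}]$. The point is that $[(-1)^{d(i,j)}]$ has rank one: fixing any vertex $k$ and using that in a bipartite graph $d(i,j)\equiv d(k,i)+d(k,j)\pmod 2$, we get $(-1)^{d(i,j)}=(-1)^{d(k,i)}(-1)^{d(k,j)}$, so $[(-1)^{d(i,j)}]=vv^{T}$ where $v=\big((-1)^{d(k,1)},\dots,(-1)^{d(k,n)}\big)^{T}$. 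Since $v^{T}v=n$, we obtain $MM^+v=v-\tfrac{1}{n}v(v^{T}v)=0$ with $v\neq 0$, so $MM^+\neq I_n$. If $M$ were invertible we would have $M^+=M^{-1}$ and hence $MM^+=I_n$, a contradiction; therefore $M$ is not invertible.

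The deductions from Theorem~\ref{M+} are immediate, so the only step needing real care is the bipartite case: recognizing $[(-1)^{d(i,j)}]$ as the rank-one matrix $vv^{T}$ (essentially the parity identity underlying Observation~\ref{Obs on M}) and checking that $v$ lies in the kernel of $MM^+$. Alternatively, one could avoid Theorem~\ref{M+} altogether and compute $\det M$ directly: expanding repeatedly along the rows of pendant vertices (each such row has a single nonzero entry) reduces $\det M$ to $\pm\det$ of the incidence matrix of the unique cycle $C_\ell$, which is $0$ when $\ell$ is even and $\pm 2$ when $\ell$ is odd. The route via Theorem~\ref{M+} is shorter, so I would present that one.
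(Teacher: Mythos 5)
Your proof is correct and takes essentially the same route as the paper, which states the observation as an immediate consequence of Theorem~\ref{M+} without writing out a proof. Your write-up simply supplies the details the paper leaves implicit: that a square matrix with a right inverse is invertible, and that in the bipartite case $MM^+\neq I_n$ because $[(-1)^{d(i,j)}]=vv^T$ is a nonzero (rank-one) matrix.
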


A combinatorial formula of the inverse of the incidence matrix of an odd unicyclic graph is given by the following theorem.

\begin{theorem}\cite[Theorem 3.1]{Hessert1}
Let $G$ be an odd unicyclic graph on $n$ vertices $1,2,\ldots,n$ and edges $e_1,e_2,\ldots,e_n$ with the cycle $C$ and the incidence matrix $M$. Then $M$ is invertible and its inverse $M^{-1}=[a_{i,j}]$ is given by
\begin{equation}\label{unicyclic M inverse}\nonumber
a_{i,j}= \begin{cases} 
\frac{(-1)^{d(e_i,j)}}{2}  & \text{ if } e_i \in C\\
0 & \text{ if } e_i \not \in C \text{ and } j \in G\setminus e_i [C]\\
(-1)^{d(e_i,j)}  & \text{ if } e_i \not \in C \text{ and } j \not \in G\setminus e_i [C].
\end{cases} 
\end{equation}
\end{theorem}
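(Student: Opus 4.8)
The plan is to verify directly that the matrix $N:=[a_{i,j}]$ is a left inverse of $M$; since $M$ and $N$ are both $n\times n$, this simultaneously proves that $M$ is invertible and that $M^{-1}=N$. If $e_k=\{p,q\}$ denotes the $k$-th edge, then the $(i,k)$-entry of $NM$ is $\sum_j a_{i,j}M_{j,k}=a_{i,p}+a_{i,q}$, so the whole proof reduces to showing that for every pair of edges $e_i,e_k$,
\[
a_{i,p}+a_{i,q}=\begin{cases}1,& e_k=e_i,\\ 0,& e_k\neq e_i.\end{cases}
\]
Fix $e_i$ and split according to whether $e_i\in C$.

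\emph{Case $e_i\in C$.} Here $a_{i,j}=\tfrac12(-1)^{d(e_i,j)}$ for all $j$, so one must check that $d(e_i,p)\equiv d(e_i,q)\pmod 2$ holds exactly when $e_k=e_i$; when $e_k=e_i$ both distances are $0$ and the sum is $1$. If $e_k$ is a bridge, delete it: $G$ splits into two parts, the odd cycle $C$ (hence both endpoints of $e_i$) lies in the part containing one endpoint of $e_k$, say $p$, and then any shortest walk from an endpoint of $e_i$ to $q$ finishes with the edge $e_k$, giving $d(e_i,q)=d(e_i,p)+1$. If instead $e_k$ is an edge of $C$ other than $e_i$, I will traverse $C=v_0v_1\cdots v_{2\ell}v_0$ with $e_i=v_0v_1$ and check that the sequence $d(e_i,v_0),d(e_i,v_1),\dots,d(e_i,v_{2\ell}),d(e_i,v_0)$ equals $0,0,1,2,\dots,\ell,\dots,2,1,0$: it is constant only across $e_i$ and otherwise goes strictly up then strictly down, so consecutive values differ by exactly $1$ across every edge of $C$ other than $e_i$, and in particular have opposite parity there.

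\emph{Case $e_i\notin C$.} Then $e_i$ is a bridge; write $e_i=\{u,w\}$ with $u$ in the component $G\setminus e_i[C]$ that contains $C$ and $w$ in the other component $G'$, which is a tree. A one-line argument (any $u$--$j$ path with $j\in G'$ must use $e_i$) gives $d(e_i,j)=d_{G'}(w,j)$ for $j\in G'$, so $d(e_i,\cdot)$ changes by exactly $1$ across each edge of $G'$. Now examine $e_k=\{p,q\}$. If $p,q$ both lie in $G\setminus e_i[C]$ then $a_{i,p}=a_{i,q}=0$, the sum is $0$, and $e_k\neq e_i$ because $w\notin G\setminus e_i[C]$. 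If $p,q$ both lie in $G'$ then $e_k\neq e_i$ because $u\notin G'$, and $a_{i,p}+a_{i,q}=(-1)^{d_{G'}(w,p)}+(-1)^{d_{G'}(w,q)}=0$ since $\{p,q\}$ is an edge of the tree $G'$. The only remaining possibility is that $e_k$ has one endpoint in each component, and $e_i$ is the unique such edge, so $e_k=e_i$: then $a_{i,u}=0$ and $a_{i,w}=(-1)^{d(e_i,w)}=(-1)^0=1$, giving sum $1$.

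I expect the one genuinely delicate point to be the cycle subcase above: pinning down the profile of $c\mapsto d(e_i,v_c)$ around $C$ and making it transparent that the odd length of $C$ is precisely what forbids two adjacent cycle vertices from being equidistant from $e_i$ (for an even cycle this fails at the antipodal edge, which is exactly why the even case needs the Moore--Penrose inverse instead). Everything else is routine bookkeeping with bridges, using only that deleting any non-cycle edge of a unicyclic graph yields a forest whose component distances agree with the ambient ones.
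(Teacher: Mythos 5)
Your proof is correct. Note, however, that the paper does not actually prove this theorem: it is quoted from \cite[Theorem 3.1]{Hessert1} and used as motivation, so there is no in-paper proof to compare against. On its own merits, your argument is sound: checking only $NM=I_n$ is legitimate because a one-sided inverse of a square matrix is the inverse, and the entrywise computation $(NM)_{i,k}=a_{i,p}+a_{i,q}$ for $e_k=\{p,q\}$ reduces everything to parity statements about $d(e_i,\cdot)$ across an edge, which you verify correctly in all cases (bridge versus cycle edge, and the distance profile $0,0,1,\dots,\ell,\dots,1,0$ around an odd cycle, which indeed has no plateau away from $e_i$ precisely because $|C|$ is odd). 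Structurally, your computation is the exact analogue of the paper's Theorem \ref{HM}, where $(HM)_{i,j}=h_{i,r_j}+h_{i,s_j}$ is evaluated case by case for the even unicyclic graph; the difference is that in the odd case this single one-sided check finishes the proof, whereas in the even case $M$ is singular and the paper must separately establish Theorem \ref{MH even unicyclic} and the Penrose identities. Two small points you could tighten: state explicitly that distances in $G$ between vertices of $C$ agree with distances along $C$ (true because every non-cycle edge is a bridge), which your cycle-profile computation silently uses, and that a shortest path from $w$ to $j$ inside the pendant tree $G'$ never leaves $G'$, which justifies $d(w,j)=d_{G'}(w,j)$.
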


The notation $G\setminus e_i [C]$ and $G\setminus e_i (C)$ are described in Section 2. The open problem of finding a combinatorial formula of the Moore-Penrose inverse of the incidence matrix of an even unicyclic graph was posed in \cite[Open Problem 1(a)]{Hessert1}. We solve this open problem in this article.

\section{Main Results}
For a graph $G$, $|G|$ denotes the number of vertices of $G$, i.e., $|G|=|V(G)|$ where $V(G)$ is the vertex set of $G$. Let $G$ be an even unicyclic graph on $n$ vertices $1,2,\ldots,n$ with $n$ edges $e_1,e_2,\ldots,e_n$. Suppose $C$ is the even cycle in $G$. We use the following notation from \cite{Hessert1}: For an edge $e_i$ not in $C$, $G\setminus e_i$ has two connected components. The connected component of $G\setminus e_i$ that contains $C$ is denoted by $G\setminus e_i[C]$. Similarly the connected component of $G\setminus e_i$ that does not contain $C$ is denoted by $G\setminus e_i(C)$. When $e_i$ is on $C$, $G\setminus e_i[C]$ and $G\setminus e_i(C)$ are defined to be $G\setminus e_i$ and the empty graph, respectively.  The unique shortest path between a vertex $i$ and $C$ is denoted by $P_{i-C}$. The shortest distance between vertex $j$ and a vertex incident with edge $e_i$ is denoted by $d(e_i,j)$. When $e_i=\{r_i,s_i\}\in C$, $d_{G\setminus e_i}(r_i,j)$ denotes the distance between vertices $r_i$ and $j$ in the tree $G\setminus e_i$. \\

Now we introduce an $n\times n$ matrix $H$ whose rows and columns are indexed by the edges and vertices of the even unicyclic graph $G$, respectively, and $H=[h_{i,j}]$ is defined as follows:
\begin{equation}\label{m^+ formula}
h_{i,j}=\frac{1}{n|C|} \begin{cases} 
(-1)^{d(e_i,j)}|C||G\setminus e_i[C]|  & \text{ if } e_i \notin C \text{ and } j \in G\setminus e_i (C)\\
(-1)^{d(e_i,j)} |C||G\setminus e_i(C)| & \text{ if } e_i \not \in C \text{ and } j \in G\setminus e_i [C]\\
(-1)^{d_{G\setminus e_i}(r_i,j)} \left(-nd_{G\setminus e_i}(r_i,j^*) +\sum\limits_{t\in C} n_t d_{G\setminus e_i}(r_i,t)  \right)& \text{ if } e_i=\{r_i,s_i\} \in C \text{ and } j \in G, 
\end{cases}
\end{equation}
where $j^*$ is the vertex on the cycle $C$ closest to vertex $j$ and $n_t$ is the number of vertices in the tree branch of $G$ starting with vertex $t\in C$.

Note that with the preceding definition of $n_t$, we have
\[\sum_{t\in C} n_t=n.\]

For example, for the graph on Figure 1, $n_4+n_5+n_7+n_9=3+4+1+1=9$.

\begin{example}
\begin{figure}
\centering
\begin{tikzpicture}[shorten > = 1pt, auto, node distance = .5cm ]
\tikzset{vertex/.style = {shape = circle, draw, minimum size = 1em}}
\tikzset{edge/.style = {-}}
\node[vertex] (1) at (4.5,-1){$1$};
\node[vertex] (2) at (4.5,1){$3$};
\node[vertex] (3) at (3,0){$2$};
\node[vertex] (4) at (1,0){$5$};
\node[vertex] (5) at (-1,1){$7$};
\node[vertex] (6) at (-1,-1){$9$};
\node[vertex] (7) at (-3,0){$4$};
\node[vertex] (8) at (-5,1){$8$};
\node[vertex] (9) at (-5,-1){$6$};
\draw[edge] (1) edge node[below]{$e_3$} (3);
\draw[edge] (2) edge node[above]{$e_5$} (3);
\draw[edge] (3) edge node[above]{$e_4$} (4);
\draw[edge] (4) edge node[above]{$e_6$} (5);
\draw[edge] (5) edge node[above]{$e_9$} (7);
\draw[edge] (7) edge node[below]{$e_7$} (6);
\draw[edge] (8) edge node[above]{$e_1$} (7);
\draw[edge] (9) edge node[below]{$e_2$} (7);
\draw[edge] (6) edge node[below]{$e_8$} (4);
\end{tikzpicture}
\caption{An even unicyclic graph}
\label{fig:even unicyclic}
\end{figure}
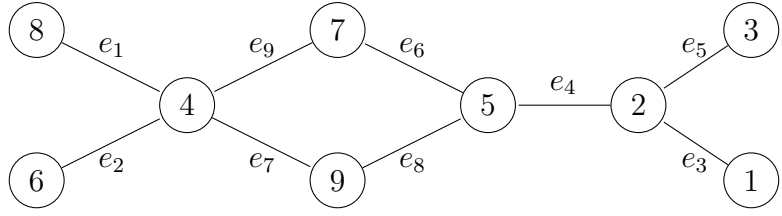

For the even unicyclic graph in Figure 1,
\[M=\left[\begin{array}{rrrrrrrrr}
0 & 0 & 1 & 0 & 0 & 0 & 0 & 0 & 0 \\
0 & 0 & 1 & 1 & 1 & 0 & 0 & 0 & 0 \\
0 & 0 & 0 & 0 & 1 & 0 & 0 & 0 & 0 \\
1 & 1 & 0 & 0 & 0 & 0 & 1 & 0 & 1 \\
0 & 0 & 0 & 1 & 0 & 1 & 0 & 1 & 0 \\
0 & 1 & 0 & 0 & 0 & 0 & 0 & 0 & 0 \\
0 & 0 & 0 & 0 & 0 & 1 & 0 & 0 & 1 \\
1 & 0 & 0 & 0 & 0 & 0 & 0 & 0 & 0 \\
0 & 0 & 0 & 0 & 0 & 0 & 1 & 1 & 0
\end{array}\right]\]
and
\[H = \frac{1}{36} \left[\begin{array}{rrrrrrrrr}
4 & -4 & 4 & 4 & 4 & -4 & -4 & 32 & -4 \\
4 & -4 & 4 & 4 & 4 & 32 & -4 & -4 & -4 \\
32 & 4 & -4 & -4 & -4 & 4 & 4 & 4 & 4 \\
-24 & 24 & -24 & 12 & 12 & -12 & -12 & -12 & -12 \\
-4 & 4 & 32 & -4 & -4 & 4 & 4 & 4 & 4 \\
10 & -10 & 10 & -8 & 10 & 8 & 17 & 8 & -1 \\
-6 & 6 & -6 & 12 & -6 & -12 & -3 & -12 & 15 \\
10 & -10 & 10 & -8 & 10 & 8 & -1 & 8 & 17 \\
-6 & 6 & -6 & 12 & -6 & -12 & 15 & -12 & -3
\end{array}\right].\]
\end{example}

\bigskip
When $e_i=\{r_i,s_i\} \in C$ and $j \in G$, $h_{ij}$ is defined in (\ref{m^+ formula}) using $r_i$. The following proposition shows that $h_{ij}$ is independent of the choice of a vertex of $e_i$:

\begin{prop}\label{def-hij}
Let $G$ be an even unicyclic graph on $n$ vertices $1,2,\ldots,n$ with $n$ edges $e_1,e_2,\ldots,e_n$. Suppose $C$ is the cycle of $G$. Let $e_i = \{r_i,s_i\} \in C$ and $j \in G$. Then the following are equal:
\[(-1)^{d_{G\setminus e_i}(r_i,j)} \left(-nd_{G\setminus e_i}(r_i,j^*) +\sum\limits_{t\in C} n_t d_{G\setminus e_i}(r_i,t)  \right)\]
and
\[(-1)^{d_{G\setminus e_i}(s_i,j)} \left(-nd_{G\setminus e_i}(s_i,j^*) +\sum\limits_{t\in C} n_t d_{G\setminus e_i}(s_i,t)  \right).\]
\end{prop}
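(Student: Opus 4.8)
The plan is to exploit the fact that $G\setminus e_i$ is a tree, together with two elementary facts: a parity identity for distances in bipartite graphs, and the observation that deleting the edge $e_i$ from the even cycle $C$ turns $C$ into a path whose two ends are $r_i$ and $s_i$.

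First I would record the parity statement. Since $G\setminus e_i$ is a tree it is bipartite, so for any three vertices $u,v,w$ of $G\setminus e_i$ one has $d_{G\setminus e_i}(u,w) + d_{G\setminus e_i}(v,w) \equiv d_{G\setminus e_i}(u,v) \pmod{2}$ (fix a $2$-colouring and note each distance has the parity of the sum of the colours of its endpoints). Applying this with $u=r_i$, $v=s_i$, $w=j$, and using that $d_{G\setminus e_i}(r_i,s_i)=|C|-1$ is odd because $|C|$ is even, gives
\[(-1)^{d_{G\setminus e_i}(r_i,j)} = -\,(-1)^{d_{G\setminus e_i}(s_i,j)}.\]
Hence the claimed equality of the two expressions is equivalent to showing that the sum of the two parenthesised factors vanishes, i.e.
\[\Bigl(-n\,d_{G\setminus e_i}(r_i,j^*) + \sum_{t\in C} n_t\,d_{G\setminus e_i}(r_i,t)\Bigr) + \Bigl(-n\,d_{G\setminus e_i}(s_i,j^*) + \sum_{t\in C} n_t\,d_{G\setminus e_i}(s_i,t)\Bigr) = 0.\]

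The key geometric input is that in the tree $G\setminus e_i$ the vertices of $C$ form the unique path from $r_i$ to $s_i$; in particular every vertex $t\in C$, as well as $j^*$ (which by definition is a vertex of $C$), lies on that path, so
\[d_{G\setminus e_i}(r_i,t) + d_{G\setminus e_i}(s_i,t) = d_{G\setminus e_i}(r_i,s_i) = |C|-1,\]
and likewise $d_{G\setminus e_i}(r_i,j^*) + d_{G\setminus e_i}(s_i,j^*) = |C|-1$. Substituting these into the displayed sum and using $\sum_{t\in C} n_t = n$ collapses it to $-n(|C|-1) + (|C|-1)\sum_{t\in C} n_t = -n(|C|-1) + (|C|-1)n = 0$, as required.

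There is no serious obstacle here; the argument is a short computation once the two facts above are in place. The only point needing a sentence of justification is that removing a single cycle edge from a unicyclic graph leaves a tree in which the cycle becomes a path whose ends are the endpoints of the deleted edge — immediate from the definition of a unicyclic graph ($n$ vertices, $n$ edges, a unique cycle). After that, the only thing to watch is the sign bookkeeping of the factor $(-1)^{(\cdot)}$.
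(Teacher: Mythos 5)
Your proof is correct and follows essentially the same route as the paper's: establish that $(-1)^{d_{G\setminus e_i}(r_i,j)}$ and $(-1)^{d_{G\setminus e_i}(s_i,j)}$ are opposite (the paper's Lemma~\ref{parity0}, which it proves by decomposing through $j^*$ rather than by your 2-colouring argument, but both hinge on $d_{G\setminus e_i}(r_i,s_i)=|C|-1$ being odd), and then reduce to the identity $d_{G\setminus e_i}(r_i,t)+d_{G\setminus e_i}(s_i,t)=|C|-1$ for $t\in C$ together with $\sum_{t\in C}n_t=n$. The paper phrases the final step as showing one parenthesised factor equals the negative of the other rather than that their sum vanishes, but this is the same computation.
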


To prove the preceding result, first we need the following lemma:
\begin{lemma}
\label{parity0}
Let $G$ be an even unicyclic graph on $n$ vertices $1,2,\ldots,n$ with $n$ edges $e_1,e_2,\ldots,e_n$. Suppose $C$ is the cycle of $G$. Let $e_i = \{r_i,s_i\} \in C$, $j \in G$, and $j^*$ be the vertex on $C$ that is closest to vertex $j$.
\begin{enumerate}
    \item[(i)] $d_{G\setminus e_i}(r_i,j^*)$ and $d_{G\setminus e_i}(s_i,j^*)$ have opposite parities.
    \item[(ii)] $d_{G\setminus e_i}(r_i,j)$ and $d_{G\setminus e_i}(s_i,j) $ have opposite parities.
     
\end{enumerate}
\end{lemma}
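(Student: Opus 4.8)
The plan is to exploit the fact that $G\setminus e_i$ is a tree, and hence bipartite. First I would note that since $e_i$ lies on the unique cycle $C$, the graph $T:=G\setminus e_i$ is connected and acyclic, i.e., a tree, so the distances $d_{G\setminus e_i}(\cdot,\cdot)$ appearing in the statement are ordinary tree distances. The unique $r_i$--$s_i$ path in $T$ is the one obtained from $C$ by deleting $e_i$, which uses $|C|-1$ edges; since $C$ is an even cycle, $|C|$ is even and therefore $d_T(r_i,s_i)=|C|-1$ is odd.

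Next I would invoke the standard parity identity for trees: for any vertices $x,y,z$ of a tree $T$,
\[ d_T(x,z)+d_T(y,z)\equiv d_T(x,y)\pmod 2, \]
which is immediate from the fact that a tree is bipartite (equivalently, from writing all three distances in terms of the median vertex of $x,y,z$, since $d_T(x,z)+d_T(y,z)-d_T(x,y)=2\,d_T(z,m)$). Applying this with $x=r_i$, $y=s_i$, and $z=j^*$ gives
\[ d_T(r_i,j^*)+d_T(s_i,j^*)\equiv d_T(r_i,s_i)=|C|-1\equiv 1\pmod 2, \]
so the two summands have opposite parities, proving (i). Applying it instead with $z=j$ proves (ii) in exactly the same way.

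I do not anticipate a real obstacle here; the only points requiring a word of care are (a) justifying that $G\setminus e_i$ is genuinely a tree when $e_i\in C$, so that the distance notation is meaningful and the parity identity applies, and (b) the computation $d_T(r_i,s_i)=|C|-1$ together with the observation that evenness of $C$ makes this odd --- this is precisely where the hypothesis that $G$ is an \emph{even} unicyclic graph enters. If one prefers to avoid citing the median/bipartite parity identity, it can be proved inline in one line by two-coloring $T$: the vertices $r_i$ and $s_i$ receive different colors because their distance is odd, so every vertex $z$ is at even distance from exactly one of them and odd distance from the other, which gives both (i) and (ii) at once.
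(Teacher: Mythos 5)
Your proposal is correct and rests on the same key fact as the paper's proof, namely that $d_{G\setminus e_i}(r_i,s_i)=|C|-1$ is odd because $C$ is even; the paper proves (i) via the exact identity $d_{G\setminus e_i}(r_i,j^*)+d_{G\setminus e_i}(s_i,j^*)=|C|-1$ (using that $j^*$ lies on the $r_i$--$s_i$ path) and then reduces (ii) to (i) by writing $d_{G\setminus e_i}(\cdot,j)=d_{G\setminus e_i}(\cdot,j^*)+d(j,j^*)$, whereas you invoke the bipartite/median parity congruence to get both parts uniformly. This is essentially the same argument in a slightly more general packaging.
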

\begin{proof}

\begin{enumerate}
    \item[(i)]  Since $|C|$ is even, $d_{G \setminus e_i}(r_i,j^*)+d_{G \setminus e_i}(s_i,j^*) = |C|-1$ is odd. Therefore $d_{G \setminus e_i}(r_i,j^*)$ and $d_{G \setminus e_i}(s_i,j^*)$ have opposite parities. 
    
    \item[(ii)] Note that $d_{G\setminus e_i}(r_i,j) = d_{G \setminus e_i}(r_i,j^*) + d(j,j^*)$ and $d_{G\setminus e_i}(s_i,j) = d_{G \setminus e_i}(s_i,j^*) + d(j,j^*)$. Then   
    $d_{G\setminus e_i}(r_i,j)$ and $d_{G\setminus e_i}(s_i,j) $ have opposite parities by (i).
     
    \end{enumerate}
\end{proof}

{\it Proof of Proposition \ref{def-hij}.}
Without loss of generality, suppose that $d_{G\setminus e_i}(r_i,j)$ is even and $d_{G\setminus e_i}(s_i,j)$ is odd by \ref{parity0} (ii). Then it suffices to show that the following are equal:
\[(-1)^{d_{G\setminus e_i}(r_i,j)} \left(-nd_{G\setminus e_i}(r_i,j^*) +\sum\limits_{t\in C} n_t d_{G\setminus e_i}(r_i,t)  \right) 
= -nd_{G\setminus e_i}(r_i,j^*) +\sum\limits_{t\in C} n_t d_{G\setminus e_i}(r_i,t)\]
and 
\[(-1)^{d_{G\setminus e_i}(s_i,j)} \left(-nd_{G\setminus e_i}(s_i,j^*) +\sum\limits_{t\in C} n_t d_{G\setminus e_i}(s_i,t)  \right) 
= nd_{G\setminus e_i}(s_i,j^*) -\sum\limits_{t\in C} n_t d_{G\setminus e_i}(s_i,t).\]

Since $d_{G\setminus e_i}(s_i,j^*) = |C| - 1 -d_{G\setminus e_i}(r_i,j^*)$ and $d_{G\setminus e_i}(s_i,t) = |C| - 1 -d_{G\setminus e_i}(r_i,t)$ for each $t \in C$,  we have \\
\begin{align*}
   &\; nd_{G\setminus e_i}(s_i,j^*) -\sum\limits_{t\in C} n_t d_{G\setminus e_i}(s_i,t)  \\
   =&\; n (|C| - 1 -d_{G\setminus e_i}(r_i,j^*)) -\sum\limits_{t\in C} n_t (|C| - 1 -d_{G\setminus e_i}(r_i,t))\\
   =&\; n(|C|-1) -nd_{G\setminus e_i}(r_i,j^*) -(|C|-1)\sum\limits_{t\in C} n_t 
   +\sum\limits_{t\in C}n_t d_{G\setminus e_i}(r_i,t)\\
   =&\; n(|C| -1) -nd_{G\setminus e_i}(r_i,j^*) -n(|C|-1) + \sum\limits_{t\in C}n_t d_{G\setminus e_i}(r_i,t)\\
   =&-nd_{G\setminus e_i}(r_i,j^*) + \sum\limits_{t\in C}n_t d_{G\setminus e_i}(r_i,t).
\end{align*}
\qed

Now we show that the matrix $H$ defined in (\ref{m^+ formula}) is the Moore-Penrose inverse of the incidence matrix of the corresponding even unicyclic graph. First we need the following results using the following notation: When there are unique shortest paths from vertex $i$ to vertex $j$ and edge $e_j$, they are denoted by $P_{i-j}$ (or $P_{j-i}$) and $P_{e_j-i}$ (or $P_{i-e_j}$), respectively.

\begin{lemma}\label{parity1}
Let $G$ be an even unicyclic graph on $n$ vertices $1,2,\ldots,n$ with the cycle $C$. Let $e_i$ be an edge and  $j$ be a vertex of $G$. 
\begin{enumerate}
    \item[(a)] Let $e_i\notin C$ and $j\in G\setminus e_i (C)$. If $k\in G\setminus e_i (C)$, then $(-1)^{d(e_i,k)+d(k,j)}=(-1)^{d(e_i,j)}$.
    If $k\in G\setminus e_i [C]$, then  $(-1)^{d(e_i,k)+d(k,j)}=-(-1)^{d(e_i,j)}$.
   
    \item[(b)] Let $e_i\notin C$ and $j\in G\setminus e_i [C]$. If $k\in G\setminus e_i (C)$, then $(-1)^{d(e_i,k)+d(k,j)}=-(-1)^{d(e_i,j)}$.
    If $k\in G\setminus e_i [C]$, then $(-1)^{d(e_i,k)+d(k,j)}=(-1)^{d(e_i,j)}$.
    
    \item[(c)] Let $e_i=\{r_i,s_i\}\in C$ and $j\in G$. Then for any vertex $k\in G$,
    $$(-1)^{d_{G\setminus e_i}(r_i,k)+d(k,j)}=(-1)^{d_{G\setminus e_i}(r_i,j)}.$$
    
    \item[(d)] Let $e_i=\{r_i,s_i\}\in C$. Then
    $$\sum\limits_{t\in C} n_t d_{G\setminus e_i}(r_i,t)
    =\sum_{k=1}^n d_{G\setminus e_i}(r_i,k^*).$$
    
\end{enumerate}
\end{lemma}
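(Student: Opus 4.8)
The plan is to reduce all four parts to a single parity fact about bipartite graphs: if $H$ is bipartite with parts $X$ and $Y$, then $d_H(u,v)$ is even when $u,v$ lie in the same part and odd otherwise, so every closed walk in $H$ has even length and, for any vertices $a,b,c$ of $H$, $d_H(a,b)+d_H(b,c)\equiv d_H(a,c)\pmod{2}$. Since $G$ is an \emph{even} unicyclic graph, $G$ is bipartite; moreover any connected spanning subgraph of $G$ has the same bipartition (its edges still join the two parts), so for connected $H\subseteq G$ we have $d_H(u,v)\equiv d_G(u,v)\pmod{2}$ for all $u,v$. If $e_i\in C$ then $G\setminus e_i$ is a tree, hence connected and bipartite; if $e_i\notin C$ then $e_i$ is a bridge of $G$, and each of $G\setminus e_i[C]$ and $G\setminus e_i(C)$ is connected and bipartite. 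I would record these remarks first and then use them throughout.

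For (a) and (b), write $e_i=\{r_i,s_i\}$ with $r_i\in G\setminus e_i[C]$ and $s_i\in G\setminus e_i(C)$. Because $e_i$ is a bridge, a shortest path from a vertex $v$ to the endpoint of $e_i$ not on $v$'s side must pass through the endpoint on $v$'s side; hence $d(e_i,v)$ equals the distance from $v$ to the endpoint of $e_i$ on $v$'s side, and when $v,w$ are on opposite sides the shortest $v$--$w$ path crosses $e_i$ exactly once, so $d(v,w)=d(v,p)+1+d(q,w)$ with $p,q$ the endpoints of $e_i$ on $v$'s and $w$'s sides. With this, each sub-claim of (a)--(b) reduces to one of two computations. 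If $k$ and $j$ are on the same side of $e_i$, say with common near endpoint $p$, then $d(e_i,k)=d(p,k)$, $d(e_i,j)=d(p,j)$, and $d(k,j)$ are all distances inside the one bipartite component containing $p,k,j$, so the parity identity gives $d(e_i,k)+d(k,j)\equiv d(e_i,j)\pmod{2}$ --- the ``$+$'' case. If $k$ and $j$ are on opposite sides, substituting $d(k,j)=d(k,p)+1+d(q,j)$ and $d(e_i,k)=d(k,p)$ makes the exponent $2d(k,p)+1+d(q,j)\equiv 1+d(e_i,j)\pmod{2}$ --- the ``$-$'' case. Matching these two computations to the four situations in (a)--(b) according to which side contains $j$ and which contains $k$ completes both parts.

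For (c), $G\setminus e_i$ is a tree, so the parity identity applied to $r_i,k,j$ in $G\setminus e_i$ gives $d_{G\setminus e_i}(r_i,k)+d_{G\setminus e_i}(k,j)\equiv d_{G\setminus e_i}(r_i,j)\pmod{2}$; replacing $d_{G\setminus e_i}(k,j)$ by $d(k,j)$, which is legitimate modulo $2$ by the spanning-subgraph remark of the first paragraph, yields the claimed identity $(-1)^{d_{G\setminus e_i}(r_i,k)+d(k,j)}=(-1)^{d_{G\setminus e_i}(r_i,j)}$. For (d), I would reorganize the right-hand sum by the value of $k^*$: since the vertices $k$ with $k^*=t$ are precisely the $n_t$ vertices of the tree branch of $G$ at $t$,
\[\sum_{k=1}^n d_{G\setminus e_i}(r_i,k^*)=\sum_{t\in C}\;\sum_{k:\,k^*=t}d_{G\setminus e_i}(r_i,t)=\sum_{t\in C}n_t\,d_{G\setminus e_i}(r_i,t).\]
The step I expect to require the most care is the opposite-side case in (a)--(b): one must argue cleanly that the shortest $k$--$j$ path crosses the bridge $e_i$ exactly once and that the decomposition $d(k,j)=d(k,p)+1+d(q,j)$, together with $d(e_i,k)=d(k,p)$, is valid. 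Everything else is either the one-line bipartite parity lemma, the observation that passing to a connected spanning subgraph preserves distance parities, or the elementary double counting in (d).
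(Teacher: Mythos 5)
Your proposal is correct. It proves the same statements by a somewhat different organizing principle: where the paper argues each parity claim by explicit shortest-path decompositions with case analysis (e.g., whether $k\in P_{e_i-j}$, and in part (c) whether $j\in P'_{r_i-k}$, introducing branch vertices $k'$ and $\widetilde{k}$ and exact identities like $d(e_i,k)+d(k,j)=d(e_i,j)+2d(k,k')$), you derive everything from the single fact that in a connected bipartite graph the parity of $d(a,b)$ is determined by the bipartition classes of $a$ and $b$, so $d(a,b)+d(b,c)\equiv d(a,c)\pmod 2$, and that connected subgraphs of the bipartite graph $G$ inherit this. This collapses the paper's same-side case of (a)--(b) and both cases of (c) into one line each, and in particular lets you avoid the paper's explicit formula $d_{G\setminus e_i}(k,j)=|C|+2d(j,j^*)+2d(k,k^*)-d(k,j)$ for the detour around the cycle. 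The one place you still need an explicit decomposition --- the bridge-crossing case $d(k,j)=d(k,p)+1+d(q,j)$ with $d(e_i,k)=d(k,p)$ --- matches the paper's computation $d(e_i,k)+d(k,j)=d(e_i,j)+2d(e_i,k)+1$. Part (d) is the same double counting as in the paper. One cosmetic point: your justification speaks of connected \emph{spanning} subgraphs, but $G\setminus e_i(C)$ and $G\setminus e_i[C]$ are not spanning; the property you actually use (any connected subgraph of a bipartite graph inherits the $2$-coloring, hence distance parities agree with those in $G$) is exactly what your parenthetical argument establishes, so this is only a wording issue.
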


\begin{proof}
\begin{enumerate}
    \item[(a)] First let $k\in G\setminus e_i [C]$. Then $d(e_i,k)+d(k,j) = d(e_i,j) + 2d(e_i,k) + 1$. So $d(e_i,k)+d(k,j)$ and $d(e_i,j)$ have opposite parities which implies 
    \[(-1)^{d(e_i,k)+d(k,j)}=-(-1)^{d(e_i,j)}.\]
    
    Now let $k\in G\setminus e_i (C)$. It suffices to show that $d(e_i,k)+d(k,j)$ and $d(e_i,j)$ have the same parity. If $k\in P_{e_i-j}$, then $d(e_i,k)+d(k,j) = d(e_i,j)$. Now suppose $k\notin P_{e_i-j}$. Define $k'$ as the vertex on $P_{e_i-k} \cap P_{e_i-j}$ that is closest to $k$. For example, $k'=j$ when $j \in P_{e_i-k}$. In all possible cases for $k'$, $d(e_i,k)+d(k,j) = d(e_i,j) + 2d(k,k')$.
    
    \item[(b)]  The proof is similar to that of (a).
    
    \item[(c)] Let $k$ be a vertex of $G$. First note that $d_{G\setminus e_i}(k,j)$ is either $d(k,j)$ (when there is a shortest path, not necessarily unique, between $k$ and $j$ in $G$ not containing $e_i$) or $|C|+2d(j,j^*)+2d(k,k^*)-d(k,j)$ (when $P_{k-j}$ contains $e_i$). Since $|C|$ is even and $-d(k,j)$ has the same parity as $d(k,j)$, $d_{G\setminus e_i}(k,j)$ and $d(k,j)$ have the same parity. Therefore, it suffices to show that $d_{G\setminus e_i}(r_i,j)$ and $d_{G\setminus e_i}(r_i,k)+d_{G\setminus e_i}(k,j)$ have the same parity. The unique shortest path between vertices $x$ and $y$ in the tree $G \setminus e_i$ is denoted by $P'_{x-y}$ in the following proof.
    
    If $k\in P'_{r_i-j}$, then $d_{G\setminus e_i}(r_i,j)=d_{G\setminus e_i}(r_i,k)+d_{G\setminus e_i}(k,j)$. Now suppose $k\notin P'_{r_i-j}$.
    
    Case 1. $j\in P'_{r_i-k}$\\
    In this case, $d_{G\setminus e_i}(r_i,j)=d_{G\setminus e_i}(r_i,k)-d_{G\setminus e_i}(k,j)$. Since $d_{G\setminus e_i}(k,j)$ and $-d_{G\setminus e_i}(k,j)$ have the same parity, so do $d_{G\setminus e_i}(r_i,j)$ and $d_{G\setminus e_i}(r_i,k)+d_{G\setminus e_i}(k,j)$.
    
    Case 2. $j\notin P'_{r_i-k}$\\
    In this case, $d_{G\setminus e_i}(r_i,k)+d_{G\setminus e_i}(k,j)=d_{G\setminus e_i}(r_i,j)+2d_{G\setminus e_i}(k,\widetilde{k})$ where $\widetilde{k}$ is the vertex on $P'_{r_i-j}$ that is closest to $k$ in $G\setminus e_i$. Thus $d_{G\setminus e_i}(r_i,j)$ and $d_{G\setminus e_i}(r_i,k)+d_{G\setminus e_i}(k,j)$ have the same parity.
    
    \item[(d)] For each vertex $t\in C$, suppose $G_t$ is the tree branch of $G$ starting with $t$. Then the vertices of $G$ are partitioned into vertices of $G_t,\;t\in C$. Then 
    $$\sum_{k=1}^n d_{G\setminus e_i}(r_i,k^*)
    =\sum\limits_{t\in C} \sum\limits_{k\in G_t} d_{G\setminus e_i}(r_i,k^*)
    =\sum\limits_{t\in C} |G_t| d_{G\setminus e_i}(r_i,t)
    =\sum\limits_{t\in C} n_t d_{G\setminus e_i}(r_i,t).$$
    
\end{enumerate}
\end{proof}

\begin{theorem}\label{MH even unicyclic}
Let $G$ be an even unicyclic graph on $n$ vertices $1,2,\ldots,n$ with the incidence matrix $M$. For the matrix $H$ defined in (\ref{m^+ formula}),  we have 
\begin{enumerate}
    \item[(a)] $H[(-1)^{d(i,j)}]=O$. 
    
    \item[(b)] $MH=I_n-\frac{1}{n}[(-1)^{d(i,j)}]$.
\end{enumerate}
\end{theorem}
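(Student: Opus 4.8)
The plan is to verify (a) and (b) by direct computation from the definition (\ref{m^+ formula}) of $H$, splitting into cases according to which of the three branches of the formula for $h_{i,j}$ applies and using Lemmas \ref{parity0} and \ref{parity1} to handle all signs $(-1)^{d(\cdot,\cdot)}$. Since $G$ is bipartite, $(-1)^{d(k,j)}=(-1)^{d(v_0,k)}(-1)^{d(v_0,j)}$ for any fixed vertex $v_0$, so the $(i,j)$-entry of $H[(-1)^{d(i,j)}]$ equals $(-1)^{d(v_0,j)}\sum_{k}h_{i,k}(-1)^{d(v_0,k)}$; hence part (a) reduces to showing $\sum_{k}h_{i,k}(-1)^{d(v_0,k)}=0$ for a single conveniently chosen vertex $v_0=v_0(e_i)$.

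For (a) I would take $v_0$ to be an endpoint of $e_i$. If $e_i=\{r_i,s_i\}\notin C$ with $s_i\in G\setminus e_i(C)$, then (by the parity bookkeeping behind Lemma \ref{parity1}(a)--(b)) $(-1)^{d(s_i,k)}=(-1)^{d(e_i,k)}$ for $k\in G\setminus e_i(C)$ and $(-1)^{d(s_i,k)}=-(-1)^{d(e_i,k)}$ for $k\in G\setminus e_i[C]$; substituting the first two branches of (\ref{m^+ formula}), every factor $(-1)^{d(e_i,k)}$ is squared away, the $k\in G\setminus e_i(C)$ terms sum to $\tfrac1n|G\setminus e_i(C)|\,|G\setminus e_i[C]|$, the $k\in G\setminus e_i[C]$ terms sum to its negative, and the total is $0$. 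If $e_i=\{r_i,s_i\}\in C$, take $v_0=r_i$; since $|C|$ is even, $d(r_i,k)$ and $d_{G\setminus e_i}(r_i,k)$ have the same parity (as in the proof of Lemma \ref{parity1}(c)), so every sign again squares away and $\sum_k h_{i,k}(-1)^{d(r_i,k)}=\tfrac1{n|C|}\sum_{k\in G}\big(-n\,d_{G\setminus e_i}(r_i,k^*)+\sum_{t\in C}n_t\,d_{G\setminus e_i}(r_i,t)\big)$, which vanishes because the inner sum over $t$ is independent of $k$ while $\sum_{k\in G}d_{G\setminus e_i}(r_i,k^*)=\sum_{t\in C}n_t\,d_{G\setminus e_i}(r_i,t)$ by Lemma \ref{parity1}(d).

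For (b), the $(i,j)$-entry of $MH$ is $\sum_{e_k\ni i}h_{k,j}$, the sum of the rows of $H$ indexed by the edges incident with vertex $i$, and the goal is to show it equals $\delta_{i,j}-\tfrac1n(-1)^{d(i,j)}$. I would split on whether $i\in C$ and, within each case, on where $j$ lies relative to the branch (or subtree) hanging ``below'' $i$. When $i\notin C$, exactly one edge $f$ at $i$ points toward $C$ (so $i\in G\setminus f(C)$) and every other edge $e_k$ at $i$ points away (so $i\in G\setminus e_k[C]$); Lemma \ref{parity1}(a)--(b) turns each sign into $\pm(-1)^{d(i,j)}$, and the resulting combination of component sizes collapses using $|G\setminus e_k(C)|+|G\setminus e_k[C]|=n$ and the telescoping identity $|G\setminus f(C)|=1+\sum_{e_k\ni i,\,e_k\ne f}|G\setminus e_k(C)|$, leaving exactly $\delta_{i,j}-\tfrac1n(-1)^{d(i,j)}$ (the Kronecker term surviving only when $j=i$). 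When $i\in C$, the two cycle edges $e_{k_1},e_{k_2}$ at $i$ contribute the third (most involved) branch of (\ref{m^+ formula}); taking $i$ as the reference endpoint of each (legitimate by Proposition \ref{def-hij}), one has $(-1)^{d_{G\setminus e_{k_1}}(i,j)}=(-1)^{d_{G\setminus e_{k_2}}(i,j)}=(-1)^{d(i,j)}$ together with $d_{G\setminus e_{k_1}}(i,x)+d_{G\setminus e_{k_2}}(i,x)=|C|$ for every vertex $x$ with $x^*\ne i$ (one walks around $C$ in the two directions) and $=0$ when $x^*=i$. Using this and $\sum_{t\in C}n_t=n$, the combined cycle contribution $h_{k_1,j}+h_{k_2,j}$ collapses to $\tfrac1n(n-n_i)(-1)^{d(i,j)}$ or $-\tfrac1n n_i(-1)^{d(i,j)}$ according as $j^*=i$ or $j^*\ne i$, and the remaining tree-edge terms at $i$ (handled as in the $i\notin C$ case) bring the answer to $\delta_{i,j}-\tfrac1n(-1)^{d(i,j)}$.

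I expect the case $i\in C$ of part (b) to be the main obstacle: it is the only place where the third branch of (\ref{m^+ formula}) is summed against itself, and making the expression collapse requires juggling the ``both ways around $C$'' identity, the parity statement of Lemma \ref{parity1}(c), the distance identity of Lemma \ref{parity1}(d), and the tree-branch bookkeeping simultaneously; the other cases are essentially telescoping. As a possible shortcut, once (a) is proved---equivalently $Hv=0$ for the $\pm1$ vector $v$ with $v_j=(-1)^{d(1,j)}$, which spans the left null space of $M$---part (b) is equivalent to the single identity $MHM=M$, since from $MHM=M$ and $v^{T}M=0$ one deduces that $MH-\big(I_n-\tfrac1n vv^{T}\big)$ annihilates the column space $v^{\perp}$ of $M$ and hence equals $uv^{T}$ for some vector $u$, while evaluating at $v$ and using $MHv=0$ forces $u=0$, so (b) then follows from Theorem \ref{M+}(b); but checking $MHM=M$ still requires a case analysis of comparable length.
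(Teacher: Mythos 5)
Your proposal is correct and follows essentially the same route as the paper: a direct entrywise verification of both identities via case analysis on whether $e_i$ (respectively the vertex $i$) lies on $C$, using the parity facts of Lemma \ref{parity1} for the signs and the same telescoping identities for the component sizes $|G\setminus e(C)|$, $|G\setminus e[C]|$ and the quantities $n_t$. The only departure is your bipartite factorization $(-1)^{d(k,j)}=(-1)^{d(v_0,k)}(-1)^{d(v_0,j)}$ in part (a), which collapses the paper's two $j$-cases into a single row-sum computation --- a mild streamlining rather than a genuinely different argument.
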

\begin{proof}
Let $e_1,e_2,\ldots,e_n$ be the edges of $G$.

\begin{enumerate}
    \item[(a)] We prove $H[(-1)^{d(i,j)}]=O$ by the following three cases.\\
    
    Case 1. $e_i\notin C$ and $j\in G\setminus e_i (C)$ \\
    The $(i,j)$-entry of $H[(-1)^{d(i,j)}]$ is given by
    \begin{align*}
    &\hspace{16pt} \frac{1}{n} \left[\sum_{k\in G\setminus e_i (C)}  (-1)^{d(e_i,k)+d(k,j)}|G\setminus e_i[C]|
    + \sum_{k\in G\setminus e_i [C]} (-1)^{d(e_i,k)+d(k,j)} |G\setminus e_i(C)|    \right]\\
    &= \frac{1}{n} \left[\sum_{k\in G\setminus e_i (C)}  (-1)^{d(e_i,j)}|G\setminus e_i[C]|
    + \sum_{k\in G\setminus e_i [C]} -(-1)^{d(e_i,j)} |G\setminus e_i(C)|  \right] (\text{by Lemma } \ref{parity1}(a))\\
    &= \frac{(-1)^{d(e_i,j)}}{n} \left[\sum_{k\in G\setminus e_i (C)} |G\setminus e_i[C]|
    - \sum_{k\in G\setminus e_i [C]} |G\setminus e_i(C)|  \right]\\
    &=  \frac{(-1)^{d(e_i,j)}]}{n} \left[ |G\setminus e_i (C)| |G\setminus e_i[C]| -|G\setminus e_i[C]| |G\setminus e_i (C)|  \right]\\
    &= 0.
    \end{align*}
    
     Case 2. $e_i\notin C$ and $j\in G\setminus e_i [C]$ \\
     We use Lemma \ref{parity1}(b). The proof is similar to that of Case 1.
  
    Case 3. $e_i\in C$ and $j\in G$ \\
    
   The $(i,j)$-entry of $H[(-1)^{d(i,j)}]$ is given by
          \begin{align*}
    &\hspace{16pt} \frac{1}{n|C|} \sum_{k=1}^n (-1)^{d_{G\setminus e_i}(r_i,k)+d(k,j)} \left(-nd_{G\setminus e_i}(r_i,k^*) +\sum\limits_{t\in C} n_t d_{G\setminus e_i}(r_i,t)  \right)  \\ 
    =& \frac{1}{n|C|} \sum_{k=1}^n (-1)^{d_{G\setminus e_i}(r_i,j)} \left(-nd_{G\setminus e_i}(r_i,k^*) +\sum\limits_{t\in C} n_t d_{G\setminus e_i}(r_i,t)  \right) (\text{by Lemma } \ref{parity1}(c))\\
    =& \frac{(-1)^{d_{G\setminus e_i}(r_i,j)}}{n|C|}   \left(-n \sum_{k=1}^n d_{G\setminus e_i}(r_i,k^*) +\sum_{k=1}^n\sum\limits_{t\in C} n_t d_{G\setminus e_i}(r_i,t)  \right)\\
    =& \frac{(-1)^{d_{G\setminus e_i}(r_i,j)}}{n|C|}   \left( 
    -n\sum_{k=1}^n d_{G\setminus e_i}(r_i,k^*)
    +n\sum\limits_{t\in C} n_t d_{G\setminus e_i}(r_i,t)\right)\\
    =& 0.  (\text{by Lemma } \ref{parity1}(d))
    \end{align*}
   
    \item[(b)] The $(i,j)$-entry of $MH$ is given by
$$(MH)_{i,j} 
=\sum_{t=1}^k h_{p_t,j},$$
where $e_{p_1},e_{p_2},\ldots,e_{p_k}$ are all the edges incident with vertex $i$.\\ 

First suppose $i=j$.  

Case 1. $i\notin C$\\
Let $e_{p_1} \in P_{i-C}$. Then \begin{align*}
           \sum_{t=1}^k h_{p_t,i} &= h_{p_1,i} + \sum_{t=2}^k h_{p_t,i}\\
          &=  (-1)^{d(e_{p_1},i)} \frac{|G \setminus e_{p_1} [C]|}{n} + \sum_{t=2}^k (-1)^{d(e_{p_t},i)} \frac{|G \setminus e_{p_t} (C)|}{n}\\ 
          &=  \frac{1}{n}\left( |G \setminus e_{p_1} [C]|+ \sum_{t=2}^k |G \setminus e_{p_t} (C)| \right)\\ 
          &=  \frac{1}{n}\left( |G \setminus e_{p_1} [C]|+ |G \setminus e_{p_1} (C)| - 1 \right)\\
          &=\frac{n-1}{n}.
        \end{align*}

Case 2. $i\in C$\\ 
Let $e_{p_1}$ and $e_{p_2}$ be the edges on $C$ that are incident with vertex $i$. 
Then \begin{align*}
    &\;\;\sum_{t=1}^k h_{p_t,i}\\
    =& h_{p_1,i} + h_{p_2,i} + \sum_{t=3}^k h_{p_t,i}\\
    =& \frac{(-1)^{d_{G \setminus e_{p_1}}(i,i)}}{n |C|} \left(-n d_{G \setminus e_{p_1}}(i,i) + \sum_{t \in C} n_t d_{G \setminus e_{p_1}}(i,t) \right)\\
    &+ \frac{(-1)^{d_{G \setminus e_{p_2}}(i,i)}}{n |C|} \left(-n d_{G \setminus e_{p_2}}(i,i) + \sum_{t \in C} n_t d_{G \setminus e_{p_2}}(i,t) \right)
    + \sum_{t=3}^k \frac{(-1)^{d(e_{p_t},i)} |C| |G \setminus e_{p_t} (C)|}{n |C|}.
    \end{align*}
Since  $d_{G \setminus e_{p_1}}(i,i)=d_{G \setminus e_{p_2}}(i,i)=d(e_{p_t},i)=0$ for $t=3,4,\ldots,k$, the above becomes   
    \begin{align*}
    & \frac{1}{n |C|}  \sum_{t \in C} n_t d_{G \setminus e_{p_1}}(i,t)
    + \frac{1}{n |C|}  \sum_{t \in C} n_t d_{G \setminus e_{p_2}}(i,t)
    + \sum_{t=3}^k \frac{|C| |G \setminus e_{p_t} (C)|}{n |C|}\\
    =& \frac{1}{n |C|} \left(\sum_{t \in C} n_t d_{G \setminus e_{p_1}}(i,t) + \sum_{t \in C} n_t d_{G \setminus e_{p_2}}(i,t) + |C| \sum_{t=3}^k |G \setminus e_{p_t} (C)|\right)\\
    =& \frac{1}{n |C|} \left( \sum_{t \in C, t \neq i} n_t (d_{G \setminus e_{p_1}}(i,t) + d_{G \setminus e_{p_2}}(i,t)) + |C| (n_i - 1)\right)\\
    =& \frac{1}{n |C|} \left(\sum_{t \in C, t \neq i} n_t |C| + |C| (n_i - 1)\right)\\
    =& \frac{1}{n} \left( \sum_{t \in C, t \neq i} n_t +  (n_i - 1)\right)\\
    =& \frac{1}{n} \left( (n - n_i) +  (n_i - 1)\right)\\
    =& \frac{n-1}{n}.
\end{align*}

Now suppose $i\neq j$. Without loss of generality, let $e_{p_1}$ be on  a shortest $i-j$ path.

Case 1. $i\notin C$ and $j\notin C$\\
        
        Subcase (i) $j\notin P_{i-C}$, $i\notin P_{j-C}$\\ 
        Then \begin{align*}
          \sum_{t=1}^k h_{p_t,j} &= h_{p_1,j} + \sum_{t=2}^k h_{p_t,j}\\
          &=  (-1)^{d(e_{p_1},j)} \frac{|G \setminus e_{p_1} (C)|}{n} + \sum_{t=2}^k (-1)^{d(e_{p_t},j)} \frac{|G \setminus e_{p_t} (C)|}{n}.
          \end{align*}
Since $d(e_{p_1},j)=d(i,j)-1$ and $d(e_{p_t},j)=d(i,j)$ for $t=2,3,\ldots,k$, the above becomes          
          \begin{align*}
          &  -(-1)^{d(i,j)} \frac{|G \setminus e_{p_1} (C)|}{n} + \sum_{t=2}^k (-1)^{d(i,j)} \frac{|G \setminus e_{p_t} (C)|}{n}\\
          =&  \frac{(-1)^{d(i,j)}}{n} \left( -|G \setminus e_{p_1} (C)| + \sum_{t=2}^k |G \setminus e_{p_t} (C)| \right)\\
          =& \frac{(-1)^{d(i,j)}}{n} \left( -|G \setminus e_{p_1} (C)| + |G \setminus e_{p_1} (C)| - 1 \right)\\
          =&  \frac{-(-1)^{d(i,j)}}{n}.
        \end{align*}
        
        Subcase (ii) $i\in P_{j-C}$\\ 
        Let $e_{p_2} \in P_{i-C}$.  Then \begin{align*}
           \sum_{t=1}^k h_{p_t,j} &= h_{p_1,j} + h_{p_2,j} + \sum_{t=3}^k h_{p_t,j}\\
           &=  (-1)^{d(e_{p_1},j)} \frac{|G \setminus e_{p_1} [C]|}{n} + (-1)^{d(e_{p_2},j)} \frac{|G \setminus e_{p_2} [C]|}{n} + \sum_{t=3}^k (-1)^{d(e_{p_t},j)} \frac{|G \setminus e_{p_t} (C)|}{n}.
           \end{align*}
           
Since $d(e_{p_1,j})=d(i,j)-1$ and $d(e_{p_t,j})=d(i,j)$ for $t=2,3,\ldots,k$, the above becomes
           \begin{align*}
           &\;\; -(-1)^{d(i,j)} \frac{|G \setminus e_{p_1} [C]|}{n} + (-1)^{d(i,j)} \frac{|G \setminus e_{p_2} [C]|}{n} + \sum_{t=3}^k (-1)^{d(i,j)} \frac{|G \setminus e_{p_t} (C)|}{n} \\
           &= \frac{(-1)^{d(i,j)}}{n}\left( -|G \setminus e_{p_1} [C]| + |G \setminus e_{p_2} [C]| + \sum_{t=3}^k |G \setminus e_{p_t} (C)| \right).
           \end{align*}
          
Since $|G\setminus e_{p_2} (C)| = 1+|G \setminus e_{p_1} (C)| +\sum_{t=3}^k |G \setminus e_{p_t} (C)|$, the above becomes
           \begin{align*}           &\;\;\;\;\frac{(-1)^{d(i,j)}}{n}\left( -|G \setminus e_{p_1} [C]| + |G \setminus e_{p_2} [C]| + \left(|G \setminus e_{p_2} (C)| - |G \setminus e_{p_1} (C)| - 1 \right) \right) \\
          &=\frac{(-1)^{d(i,j)}}{n}\left( \left(|G \setminus e_{p_2} [C]|+|G \setminus e_{p_2} (C)|\right) -\left(|G \setminus e_{p_1} [C]| +  |G \setminus e_{p_1} (C)| \right) - 1 \right) \\
           &= \frac{(-1)^{d(i,j)}}{n} (n-n-1)\\
           &= \frac{-(-1)^{d(i,j)}}{n}.
        \end{align*}
        
        Subcase (iii) $j\in P_{i-C}$.\\
        The proof is similar to that of Subcase (ii). \\
       
Case 2. $i\in C$ and $j\in C$\\
    Let $e_{p_2} \in C$.
\begin{align*}
    &\sum_{t=1}^k h_{p_t,j}\\
    =& h_{p_1,j} + h_{p_2,j} + \sum_{t=3}^k h_{p_t,j}\\
    =&  \frac{(-1)^{d_{G \setminus e_{p_1}}(i, j)}}{n|C|} \left(-n d_{G \setminus e_{p_1}} (i, j) + \sum_{t \in C} n_t d_{G \setminus e_{p_1}}(i, t) \right) \\
    &+ \frac{(-1)^{d_{G \setminus e_{p_2}}(i, j)}}{n|C|} \left(-n d_{G \setminus e_{p_2}} (i, j) + \sum_{t \in C} n_t d_{G \setminus e_{p_2}}(i, t) \right) + \sum_{t=3}^k \frac{(-1)^{d(i,j)}}{n|C|} |C||G \setminus e_t (C)|
    \end{align*}
Since $d_{G \setminus e_{p_1}}(i, j)=|C| - d(i, j)$ which has the same parity as $d(i,j)$, the above becomes   
    \begin{align*}
    & \frac{(-1)^{d(i, j)}}{n|C|}\left[ \left(-n(|C| - d(i, j)) + \sum_{t \in C} n_t d_{G \setminus e_{p_1}}(i, t) \right) \right.\\
    &+ \left(-n d(i, j) + \sum_{t \in C} n_t d_{G \setminus e_{p_2}}(i, t) \right) + \left. \sum_{t=3}^k  |C||G \setminus e_t (C)| \right] \\
    =& \frac{(-1)^{d(i, j)}}{n|C|} \left[ -n|C| + \sum_{t \in C} n_t d_{G \setminus e_{p_1}}(i, t) + \sum_{t \in C} n_t d_{G \setminus e_{p_2}}(i, t) + \sum_{t=3}^k |C||G \setminus e_t (C)|  \right] \\ 
    =& \frac{(-1)^{d(i, j)}}{n|C|} \left[ -n|C| + \sum_{t \in C, t \neq i} n_t (d_{G \setminus e_{p_1}}(i, t) + d_{G \setminus e_{p_2}}(i, t)) + \sum_{t=3}^k |C||G \setminus e_t (C)|  \right] \\
    =& \frac{(-1)^{d(i, j)}}{n|C|} \left[ -n|C| + \sum_{t \in C, t \neq i} n_t |C| + |C|\sum_{t=3}^k |G \setminus e_t (C)|  \right] \\
    =& \frac{(-1)^{d(i, j)}}{n} \left[ -n + \sum_{t \in C, t \neq i} n_t  + \sum_{t=3}^k |G \setminus e_t (C)|  \right] \\
    =& \frac{(-1)^{d(i, j)}}{n} \left[ -n + (n - n_i)  + (n_i - 1)  \right] \\
        =& \frac{-(-1)^{d(i, j)}}{n}.\\
\end{align*}

Case 3. $i\notin C$ and $j\in C$\\
Then 
          \[\sum_{t=1}^k h_{p_t,j} 
          = h_{p_1,j} + \sum_{t=2}^k h_{p_t,j}
          =  (-1)^{d(e_{p_1},j)} \frac{|G \setminus e_{p_1} (C)|}{n} + \sum_{t=2}^k (-1)^{d(e_{p_t},j)} \frac{|G \setminus e_{p_t} (C)|}{n}.\]
Since $d(e_{p_1},j)=1+d(i,j)$ and $d(e_{p_t},j)=d(i,j)$ for $t=2,3,\ldots,k$, the above becomes         
          \begin{align*}
          &  -(-1)^{d(i,j)} \frac{|G \setminus e_{p_1} (C)|}{n} + \sum_{t=2}^k (-1)^{d(i,j)} \frac{|G \setminus e_{p_t} (C)|}{n}\\
          &=  \frac{(-1)^{d(i,j)}}{n} \left( -|G \setminus e_{p_1} (C)| + \sum_{t=2}^k |G \setminus e_{p_t} (C)| \right)\\
          &=  \frac{(-1)^{d(i,j)}}{n} \left( -|G \setminus e_{p_1} (C)| + |G \setminus e_{p_1} (C)| - 1 \right)\\
          &=  \frac{-(-1)^{d(i,j)}}{n}.
        \end{align*}

Case 4. $i\in C$ and $j\notin C$\\
The proof in this case is similar to that of Case 3.
\end{enumerate}
\end{proof}

The preceding theorem gave $MH$. The following result gives a combinatorial formula for $HM$. In a connected graph, the distance between two edges $e_i$ and $e_j$, denoted by $d(e_i,e_j)$,  is the number of edges on a shortest path between a vertex on $e_i$ and a vertex on $e_j$.

\begin{theorem}\label{HM}
Let $G$ be an even unicyclic graph on $n$ vertices $1,2,\ldots,n$ with $n$ edges $e_1,e_2,\ldots,e_n$. Suppose $C$ is the cycle of $G$ and $M$ is the incidence matrix of $G$. For the matrix $H$ defined in (\ref{m^+ formula}), $HM$ is given by
\begin{equation}\label{HMeq}
(HM)_{i,j}= \frac{(-1)^{d(e_i,e_j)}}{|C|}\begin{cases} |C|   & \text{ if } e_i= e_j\notin C\\
|C|-1   & \text{ if } e_i =e_j\in C\\
1   & \text{ if } e_i \in C \text{ and } e_j\in C,\; i\neq j\\
0   & \text{ otherwise. } 
\end{cases}
\end{equation}
\end{theorem}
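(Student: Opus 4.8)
The plan is to evaluate $(HM)_{i,j}$ directly. Writing $e_j=\{r_j,s_j\}$, the $j$-th column of $M$ is the indicator of $\{r_j,s_j\}$, so $(HM)_{i,j}=h_{i,r_j}+h_{i,s_j}$, and everything reduces to computing this sum in the four regimes of (\ref{HMeq}). Whenever $e_i\in C$ we may, by Proposition \ref{def-hij}, use either endpoint of $e_i$ as the reference vertex $r_i$ in line 3 of (\ref{m^+ formula}); we pick the convenient one. The recurring tool is a parity observation from bipartiteness: if $e_i\notin C$ and $a_i$ is the endpoint of $e_i$ lying in the component of $G\setminus e_i$ that contains a given vertex $v$, then $d(e_i,v)=d(a_i,v)$ (the other endpoint of $e_i$ sits in a dead-end component), so on each side of $e_i$ the function $d(e_i,\cdot)$ is a genuine distance in a connected bipartite graph; consequently $(-1)^{d(e_i,x)}=-(-1)^{d(e_i,y)}$ for any edge $\{x,y\}$ with $x,y$ on the same side of $e_i$. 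If instead $e_i\in C$, then $G\setminus e_i$ is a bipartite tree and the analogous opposite-parity statement for $d_{G\setminus e_i}(r_i,x)$ versus $d_{G\setminus e_i}(r_i,y)$ holds for every edge $\{x,y\}$ of $G\setminus e_i$; this is recorded in Lemma \ref{parity0}(ii).

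The diagonal cases come first. If $e_i=e_j\notin C$, one endpoint of $e_i$ lies in $G\setminus e_i[C]$ and the other in $G\setminus e_i(C)$, both incident to $e_i$, so the exponents vanish and $h_{i,r_i}+h_{i,s_i}=\frac1n\bigl(|G\setminus e_i(C)|+|G\setminus e_i[C]|\bigr)=1$. If $e_i=e_j\in C$, then $r_i^*=r_i$, $s_i^*=s_i$, $d_{G\setminus e_i}(r_i,r_i)=0$, and $d_{G\setminus e_i}(r_i,s_i)=|C|-1$, which is odd since $|C|$ is even; writing $S_i:=\sum_{t\in C}n_t\,d_{G\setminus e_i}(r_i,t)$, line 3 gives $h_{i,r_i}=S_i/(n|C|)$ and $h_{i,s_i}=\bigl(n(|C|-1)-S_i\bigr)/(n|C|)$, which sum to $(|C|-1)/|C|$. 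For the ``otherwise'' rows there are two sub-cases, which together cover all $(i,j)$ with $e_i\ne e_j$ not both on $C$. If $e_i\notin C$ (and $e_j\ne e_i$), then $e_j$ is an edge of $G\setminus e_i$, so $r_j,s_j$ lie on the same side of $e_i$; then $h_{i,r_j}$ and $h_{i,s_j}$ are computed from the same line of (\ref{m^+ formula}) with a common size factor, and the opposite-parity observation forces their sum to $0$. If $e_i\in C$ and $e_j\notin C$, the two endpoints of $e_j$ have the same closest-cycle-vertex $j^*$, so $h_{i,r_j}$ and $h_{i,s_j}$ share the scalar $-n\,d_{G\setminus e_i}(r_i,j^*)+S_i$ while $(-1)^{d_{G\setminus e_i}(r_i,r_j)}=-(-1)^{d_{G\setminus e_i}(r_i,s_j)}$, so again the sum is $0$.

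The remaining, and main, case is $e_i,e_j\in C$ with $i\ne j$. Here $r_j^*=r_j$, $s_j^*=s_j$, and $e_j$ is an edge of the tree $G\setminus e_i$; using Lemma \ref{parity0}(ii) together with the symmetry of $h_{i,r_j}+h_{i,s_j}$ in $r_j,s_j$, we may assume $a:=d_{G\setminus e_i}(r_i,r_j)$ is even and $b:=d_{G\setminus e_i}(r_i,s_j)$ is odd, so $b=a\pm1$. Line 3 collapses:
\[
(HM)_{i,j}=h_{i,r_j}+h_{i,s_j}=\frac{(-na+S_i)-(-nb+S_i)}{n|C|}=\frac{b-a}{|C|}=\frac{(-1)^{\min(a,b)}}{|C|},
\]
the last step because $b-a=\pm1$ with $a$ even. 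It then remains to show that $\min(a,b)=\min_{v\in\{r_j,s_j\}}d_{G\setminus e_i}(r_i,v)$ has the same parity as $d(e_i,e_j)$: labelling the cycle $v_0v_1\cdots v_{|C|-1}$ with $e_i=\{v_0,v_1\}$, $r_i=v_1$, and $e_j=\{v_p,v_{p+1}\}$, walking along the path $C\setminus e_i$ gives $\min(a,b)=p-1$, whereas $d(e_i,e_j)=\min\bigl(p-1,\,|C|-p-1\bigr)$; since $|C|$ is even, $p-1\equiv|C|-p-1\pmod 2$, so the parities agree and $(HM)_{i,j}=(-1)^{d(e_i,e_j)}/|C|$.

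The computational heart is this last case: the algebraic collapse of line 3 is immediate, but identifying $(-1)^{\min(a,b)}$ with $(-1)^{d(e_i,e_j)}$ requires the cycle-coordinate bookkeeping and, crucially, the hypothesis that $|C|$ is even; it also relies on $h_{i,j}$ being independent of the chosen endpoint of $e_i$, which is Proposition \ref{def-hij}.
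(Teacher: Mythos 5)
Your proposal is correct and follows essentially the same route as the paper: compute $(HM)_{i,j}=h_{i,r_j}+h_{i,s_j}$ and split into the same cases, with the same algebraic collapse in each. The only differences are cosmetic --- you replace the paper's ``choose $r_i$ on the short side'' WLOG in the cycle--cycle case by an explicit check that $p-1\equiv |C|-p-1 \pmod 2$, and you spell out the $e_i\in C$, $e_j\notin C$ subcase that the paper dismisses as ``similar'' (note only that the adjacent-vertex parity fact you attribute to Lemma \ref{parity0}(ii) is not literally that lemma, though it is immediate for distances in the tree $G\setminus e_i$).
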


\begin{proof}
Let $e_i=\{r_i,s_i\}$ and $e_j=\{r_j,s_j\}$.\\

Case 1. $i=j$ \\
Note that $(HM)_{i,i}= h_{i,r_i} +h_{i,s_i}$.\\

Subcase (a) $e_i\notin C$\\
Since $d(e_i,r_i)=d(e_i,s_i)=0$ and $(r_i,s_i)$ is in $V(G\setminus e_i (C))\times V(G\setminus e_i [C])$ or in $V(G\setminus e_i [C])\times V(G\setminus e_i (C))$, we have

$$(HM)_{i,i}=\frac{1}{n}|G\setminus e_i (C)|+\frac{1}{n}|G\setminus e_i [C]|=\frac{1}{n}n=1.$$ 

Subcase (b) $e_i\in C$
\begin{align*}
    (HM)_{i,i} =& h_{i,r_i} + h_{i,s_i}\\
    =& \frac{1}{n|C|} \left[(-1)^{d_{G\setminus e_i}(r_i,r_i)} \left(-nd_{G\setminus e_i}(r_i,r_i^*) +\sum\limits_{t\in C} n_t d_{G\setminus e_i}(r_i,t)  \right) \right.\\
    &+ \left. (-1)^{d_{G\setminus e_i}(r_i,s_i)} \left(-nd_{G\setminus e_i}(r_i,s_i^*) +\sum\limits_{t\in C} n_t d_{G\setminus e_i}(r_i,t)  \right) \right].
    \end{align*}
Since $d_{G\setminus e_i}(r_i,r_i)=0$ and $d_{G \setminus e_i}(r_i,s_i)= |C|-1$ is odd,  the above becomes  
    \begin{align*}
    & \frac{1}{n|C|} \left[\sum\limits_{t\in C} n_t d_{G\setminus e_i}(r_i,t)  - \left(-n(|C|-1) +\sum\limits_{t\in C} n_t d_{G\setminus e_i}(r_i,t)  \right) \right]\\
    =& \frac{1}{n|C|} \left[n(|C|-1) + \sum\limits_{t\in C} n_t d_{G\setminus e_i}(r_i,t) - \sum\limits_{t\in C} n_t d_{G\setminus e_i}(r_i,t)  \right]\\
    =& \frac{1}{n|C|} \left[n(|C|-1)  \right]\\
    =& \frac{|C|-1}{|C|}. 
\end{align*}

Case 2. $i\neq j$\\

Subcase (a) $e_i\notin C$ and $e_j\notin C$\\
Note that $r_j$ and $s_j$ both are either in $G\setminus e_i (C)$ or in $G\setminus e_i [C]$ and $d(e_i,r_j)=d(e_i,s_j)\pm 1$. Then 
$$(HM)_{i,j}= h_{i,r_j} +h_{i,s_j}=\frac{(-1)^{d(e_i,r_j)} G\setminus e_i [C]}{n} +\frac{(-1)^{d(e_i,s_j)} G\setminus e_i [C]}{n}=0$$
or 
$$(HM)_{i,j}= h_{i,r_j} +h_{i,s_j}=\frac{(-1)^{d(e_i,r_j)} G\setminus e_i (C)}{n} +\frac{(-1)^{d(e_i,s_j)} G\setminus e_i (C)}{n}=0.$$

\medskip
Subcase (b) $e_i\in C$ and $e_j\in C$\\
Without loss of generality, let a shortest path between $e_i$  and $e_j$ be the shortest path between $r_i$ and $s_j$. Then $d(e_i,e_j)=d(r_i,s_j)=d_{G\setminus e_i}(r_i,s_j)$.

\begin{align*}
(HM)_{i,j}&= h_{i,r_j} +h_{i,s_j} \\ 
&= \frac{1}{n|C|}(-1)^{d_{G\setminus e_i}(r_i,r_j)} \left(-nd_{G\setminus e_i}(r_i,r_j^*) +\sum\limits_{t\in C} n_t d_{G\setminus e_j}(r_i,t)  \right)\\
&\;\;\;\; +\frac{1}{n|C|}(-1)^{d_{G\setminus e_i}(r_i,s_j)} \left(-nd_{G\setminus e_i}(r_i,s_j^*) +\sum\limits_{t\in C} n_t d_{G\setminus e_j}(r_i,t)  \right)
\end{align*}
Since $r_i^*=r_i$, $s_i^*=s_i$, and $d_{G\setminus e_i}(r_i,r_j)=d_{G\setminus e_i}(r_i,s_j)+1=d(e_i,e_j)+1$, the above becomes
\begin{align*}
&\;\;\;\;\frac{1}{n|C|}(-1)^{d(e_i,e_j)+1} \left(-n(d(e_i,e_j)+1) +\sum\limits_{t\in C} n_t d_{G\setminus e_j}(r_i,t)  \right)\\
&\;\;\;\; +\frac{1}{n|C|}(-1)^{d(e_i,e_j)} \left(-nd(e_i,e_j) +\sum\limits_{t\in C} n_t d_{G\setminus e_j}(r_i,t)  \right)\\
&= \frac{1}{n|C|}(-1)^{d(e_i,e_j)} \left(nd(e_i,e_j)+n -\cancel{\sum\limits_{t\in C} n_t d_{G\setminus e_j}(r_i,t)}
-nd(e_i,e_j) +\cancel{\sum\limits_{t\in C} n_t d_{G\setminus e_j}(r_i,t)} \right)\\
&= \frac{1}{n|C|}(-1)^{d(e_i,e_j)} n\\
&= \frac{1}{|C|}(-1)^{d(e_i,e_j)}.
\end{align*}

\medskip
Subcase (c) $e_i\notin C$ and $e_j\in C$\\
In this case, $r_j$ and $s_j$ are in $G\setminus e_i [C]$ and $d(e_i,r_j)=d(e_i,s_j)\pm 1$. Then 
$$(HM)_{i,j}= h_{i,r_j} +h_{i,s_j}=\frac{(-1)^{d(e_i,r_j)} G\setminus e_i (C)}{n} +\frac{(-1)^{d(e_i,s_j)} G\setminus e_i (C)}{n}=0.$$

Subcase (d) $e_i\in C$ and $e_j\notin C$\\
The proof is similar to that of Subcase (c).
\end{proof}

Now we are ready to state and prove our main result.

\begin{theorem}\label{M^+=H}
Let $G$ be an even unicyclic graph on $n$ vertices $1,2,\ldots,n$ and $n$ edges $e_1,e_2,\ldots,e_n$ with the cycle $C$ and the incidence matrix $M$. Then the matrix $H$ defined in (\ref{m^+ formula}) is the Moore-Penrose inverse of $M$. 
\end{theorem}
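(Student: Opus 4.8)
The plan is to verify that $H$ satisfies all four Moore--Penrose conditions for $M$, using the combinatorial formulas for $MH$ and $HM$ already established in Theorems~\ref{MH even unicyclic} and~\ref{HM}. Recall that $H^+ = H$ for $M$ means: $MHM = M$, $HMH = H$, $(MH)^T = MH$, and $(HM)^T = HM$. The symmetry conditions are the cheapest: Theorem~\ref{HM} gives $(HM)_{i,j}$ as a symmetric expression in $e_i,e_j$ (each case depends only on $d(e_i,e_j)$ and on whether the edges lie on $C$, both symmetric in $i,j$), so $(HM)^T = HM$ is immediate. For $(MH)^T = MH$, Theorem~\ref{MH even unicyclic}(b) says $MH = I_n - \frac1n[(-1)^{d(i,j)}]$, and since $d(i,j) = d(j,i)$ the matrix $[(-1)^{d(i,j)}]$ is symmetric, so $MH$ is symmetric.

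Next I would verify $MHM = M$. Write $MHM = (MH)M = \big(I_n - \frac1n[(-1)^{d(i,j)}]\big)M = M - \frac1n[(-1)^{d(i,j)}]M$. By Observation~\ref{Obs on M}, since an even unicyclic graph is bipartite, $[(-1)^{d(i,j)}]M = O_{n,n}$, hence $MHM = M$. This is clean and uses only results already in the excerpt.

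The last condition, $HMH = H$, is the one that requires real work. The natural approach is to compute $(HM)H$ using the explicit formula for $HM$ from Theorem~\ref{HM}. The matrix $HM$ is ``almost'' the identity: it equals $I$ on all off-cycle diagonal entries, and on the block indexed by cycle edges it is $\frac1{|C|}$ times the $|C|\times|C|$ matrix whose $(i,j)$ entry is $|C|-1$ if $i=j$ and $(-1)^{d(e_i,e_j)}$ if $i\ne j$. So $(HMH)_{i,j} = \sum_{\ell} (HM)_{i,\ell} h_{\ell,j}$, and for a row index $e_i \notin C$ only the term $\ell = i$ survives (since $(HM)_{i,\ell}=0$ unless $e_\ell = e_i$), giving $(HMH)_{i,j} = h_{i,j}$ directly. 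The only rows needing attention are those with $e_i \in C$, where one must show $\sum_{e_\ell \in C}(HM)_{i,\ell}\, h_{\ell,j} = h_{i,j}$, i.e.
\[
\frac{|C|-1}{|C|}\,h_{i,j} + \frac{1}{|C|}\sum_{\substack{e_\ell\in C\\ \ell\ne i}} (-1)^{d(e_i,e_\ell)} h_{\ell,j} = h_{i,j},
\]
equivalently $\sum_{e_\ell\in C,\ \ell\ne i}(-1)^{d(e_i,e_\ell)} h_{\ell,j} = h_{i,j}$. I expect this identity to be the main obstacle: it amounts to a summation identity over the cycle edges involving the third branch of the formula~(\ref{m^+ formula}). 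The key tools will be Lemma~\ref{parity1}(c)--(d) to control signs and rewrite $\sum_{t\in C} n_t d_{G\setminus e_\ell}(r_\ell,t)$ as $\sum_{k=1}^n d_{G\setminus e_\ell}(r_\ell,k^*)$, together with careful bookkeeping of how $d_{G\setminus e_\ell}(r_\ell,\cdot)$ changes as $e_\ell$ ranges over the cycle --- adjacent cycle edges differ by a single step around $C$, so the distances telescope. Alternatively, and perhaps more cleanly, one can avoid this direct computation: from $MHM=M$ and the symmetry of $MH$ and $HM$ one knows $H$ is \emph{a} reflexive generalized inverse once $HMH=H$ is shown, but to get $HMH=H$ for free one would want $\operatorname{rank}(H) = \operatorname{rank}(M) = n-1$; since $MH = I_n - \frac1n[(-1)^{d(i,j)}]$ has rank $n-1$ (the all-ones-type rank-one correction kills exactly one dimension, as in Theorem~\ref{M+}(b)), we get $\operatorname{rank}(H) \ge n-1$, and $H[(-1)^{d(i,j)}]=O$ from Theorem~\ref{MH even unicyclic}(a) forces the columns of $H$ to lie in an $(n-1)$-dimensional space, so $\operatorname{rank}(H) = n-1 = \operatorname{rank}(MH)$; then $HMH = H$ follows because $H$ and $MH$ have the same column space while $M$ acts injectively on it. I would present the rank argument as the primary route, since it sidesteps the cycle summation entirely, and relegate the direct computation to a remark.
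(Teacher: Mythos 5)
Your handling of three of the four Moore--Penrose conditions ($MHM=M$, $(MH)^T=MH$, $(HM)^T=HM$) is exactly the paper's argument. Where you diverge is $HMH=H$: you treat it as the hard part and propose either a direct evaluation of $(HM)H$ using Theorem~\ref{HM} (which you correctly anticipate would require a delicate summation identity over the cycle edges) or a rank argument. The paper instead gets this condition for free by associating the other way: $HMH = H(MH) = H\bigl(I_n - \tfrac{1}{n}[(-1)^{d(i,j)}]\bigr) = H - \tfrac{1}{n}H[(-1)^{d(i,j)}] = H$, using precisely the identity $H[(-1)^{d(i,j)}]=O$ of Theorem~\ref{MH even unicyclic}(a) that you already invoke inside your rank argument --- you just did not notice it yields $HMH=H$ in one line. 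Your rank route does go through: $\operatorname{rank}(MH)=n-1$ gives $\operatorname{rank}(H)\ge n-1$, and $H[(-1)^{d(i,j)}]=O$ gives $\operatorname{rank}(H)\le n-1$ (though note this identity constrains the \emph{null space} of $H$, i.e.\ $Hv=0$ for $v=[(-1)^{d(1,i)}]$, not the column space as you wrote); then $\operatorname{rank}(MH)=\operatorname{rank}(H)$ means $M$ is injective on $\operatorname{col}(H)$, and since $M(HMH-H)=(MH)^2-MH=O$ by idempotence of $MH$ while the columns of $HMH-H=H(MH-I_n)$ lie in $\operatorname{col}(H)$, one concludes $HMH=H$. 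So your argument is correct but strictly more work than needed; the one-line associativity trick is the intended route, and the direct $(HM)H$ computation you relegate to a remark should be dropped entirely.
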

\begin{proof}
Since $G$ is an even unicyclic graph, $G$ is bipartite. Then by Theorem \ref{MH even unicyclic}, $$MH=I_n-\frac{1}{n}[(-1)^{d(i,j)}] \text{ and } H[(-1)^{d(i,j)}]=O.$$
Then $HMH=H$ and $(MH)^T=MH$. To prove $H=M^+$, it suffices to show that $HM$ is symmetric and $MHM=M$. Since  $[(-1)^{d(i,j)}]M=O$ by Observation \ref{Obs on M}, 
$$MHM=M-\frac{1}{n}[(-1)^{d(i,j)}]M=M.$$

It remains to show that $HM$ is symmetric which is evident from (\ref{HMeq}) in Theorem \ref{HM}. 
\end{proof}

By (\ref{m^+ formula}) and Theorem \ref{HM}, we have the following corollary:
\begin{corollary}
Let $G$ be an even unicyclic graph on $n$ vertices $1,2,\ldots,n$ and $n$ edges $e_1,e_2,\ldots,e_n$ with the cycle $C$. Suppose $M$ is the incidence matrix of $G$ with its Moore-Penrose inverse $M^+=[m_{ij}^+]$. Then the following hold:
\begin{enumerate}
    \item[(a)] $m_{ij}^+=\frac{n-1}{n}$ if and only if edge $e_i$ is a pendant edge incident with pendant vertex $j$.
    
    \item[(b)] The $(i,i)$-entry of $M^+M$ is $\frac{|C|-1}{|C|}$ if and only if edge $e_i$ is on $C$.
\end{enumerate}
\end{corollary}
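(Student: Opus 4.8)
The plan is to derive both statements directly from the two explicit descriptions already in hand. By Theorem~\ref{M^+=H} we have $M^+=H$, so $m_{ij}^+=h_{ij}$ is the entry given by~(\ref{m^+ formula}) and $(M^+M)_{ii}=(HM)_{ii}$ is the diagonal entry given by~(\ref{HMeq}). Part~(b) is then immediate: by Theorem~\ref{HM}, $(HM)_{ii}=1$ when $e_i\notin C$ and $(HM)_{ii}=\tfrac{|C|-1}{|C|}$ when $e_i\in C$, and since $|C|\ge 4$ these are two distinct values; hence $(M^+M)_{ii}=\tfrac{|C|-1}{|C|}$ exactly when $e_i\in C$.

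For part~(a) I would bound $|h_{ij}|$ in each of the three branches of~(\ref{m^+ formula}). If $e_i\notin C$ and $j\in G\setminus e_i(C)$, then $h_{ij}=(-1)^{d(e_i,j)}|G\setminus e_i[C]|/n$; since $|G\setminus e_i[C]|=n-|G\setminus e_i(C)|\le n-1$, we get $|h_{ij}|\le\tfrac{n-1}{n}$, with equality forcing $|G\setminus e_i(C)|=1$, i.e.\ $e_i$ a pendant edge, in which case $G\setminus e_i(C)$ consists of the single pendant vertex, so $j$ is that vertex, $d(e_i,j)=0$, and $h_{ij}=+\tfrac{n-1}{n}$. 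If $e_i\notin C$ and $j\in G\setminus e_i[C]$, then $h_{ij}=(-1)^{d(e_i,j)}|G\setminus e_i(C)|/n$ and $|G\setminus e_i(C)|=n-|G\setminus e_i[C]|\le n-|C|\le n-4$, so $|h_{ij}|<\tfrac{n-1}{n}$. If $e_i=\{r_i,s_i\}\in C$, then using $0\le d_{G\setminus e_i}(r_i,t)\le|C|-1$ for each $t\in C$ together with $\sum_{t\in C}n_t=n$, the parenthesized factor in~(\ref{m^+ formula}) lies in $[-n(|C|-1),\,n(|C|-1)]$, so $|h_{ij}|\le\tfrac{|C|-1}{|C|}$. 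Assembling the cases, $h_{ij}=\tfrac{n-1}{n}$ can occur only in the first branch with $e_i$ pendant and $j$ its pendant vertex, and that configuration does produce exactly $\tfrac{n-1}{n}$ by the computation just given, which proves the equivalence.

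The only delicate point — the main obstacle — is making the inequality in the $e_i\in C$ branch strict. When $G$ has a vertex off $C$ we have $|C|<n$, so $\tfrac{|C|-1}{|C|}<\tfrac{n-1}{n}$ and we are done; when $G=C$ there are no pendant edges, so for the forward direction one must separately verify $|h_{ij}|<\tfrac{n-1}{n}$. In that degenerate case $n_t=1$ and $j^*=j$ for every vertex, $G\setminus e_i$ is a path with $r_i$ at one end so $\sum_{t\in C}d_{G\setminus e_i}(r_i,t)=\binom{n}{2}$, and the factor becomes $n\big(\tfrac{n-1}{2}-d_{G\setminus e_i}(r_i,j)\big)$, which gives $|h_{ij}|=\tfrac{1}{n}\big|\tfrac{n-1}{2}-d_{G\setminus e_i}(r_i,j)\big|\le\tfrac{n-1}{2n}<\tfrac{n-1}{n}$; here Lemma~\ref{parity1}(d) (rewriting the cycle sum as $\sum_{k=1}^n d_{G\setminus e_i}(r_i,k^*)$) is a convenient alternative. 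Once these strict inequalities are established, part~(a) follows by combining the cases and part~(b) is the two-line argument above.
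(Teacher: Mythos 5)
Your proposal is correct and follows exactly the route the paper intends: part (b) is read off the diagonal of the formula for $HM$ in Theorem~\ref{HM}, and part (a) is obtained by bounding $|h_{ij}|$ in each branch of (\ref{m^+ formula}) and checking that equality with $\frac{n-1}{n}$ forces the pendant configuration. The paper states the corollary without proof, so your write-up simply supplies the case analysis it leaves implicit, including the one genuinely delicate point (the strict bound when $G=C$, where $\frac{|C|-1}{|C|}=\frac{n-1}{n}$), which you handle correctly.
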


\section{Open Problems}
We found a combinatorial formula for the Moore-Penrose inverse $M^+$ of the incidence matrix $M$ of an even unicyclic graph.  Using $M^+$, we can find the Moore-Penrose inverses of the signless Laplacian $Q=MM^+$ and signless edge-Laplacian $S=M^+M$ as follows:
\[Q^+=(MM^T)^+=(M^T)^+M^+=(M^+)^TM^+,\] \[S^+=(M^TM)^+=M^+(M^T)^+=M^+(M^+)^T.\]

But it still remains an open problem to find simple and compact combinatorial formulas for $Q^+$ and $S^+$ for even unicyclic graphs (like that in Theorem 3.5 and Theorem 3.9 in \cite{Hessert1}). It is just a small part of the bigger problem of finding the same for bipartite graphs.

Another open problem is to extend Bapat's work on trees \cite{B} to unicyclic graphs: Find combinatorial formulas for the Moore-Penrose inverse of an oriented incidence matrix $N$ and the Laplacian matrix $L=NN^T$ of a unicyclic graph.


\bigskip

\end{document}